\newtheorem{theorem}{Theorem}[section]
\newtheorem{lemma}[theorem]{Lemma}
\newtheorem{corollary}[theorem]{Corollary}
\theoremstyle{definition}
\newtheorem{example}[theorem]{Example}
\newtheorem{conjecture}[theorem]{Conjecture}
\numberwithin{equation}{theorem}
\def\image{\operatorname{image}}
\def\Ass{\operatorname{Ass}}
\def\End{\operatorname{End}}
\def\Hom{\operatorname{Hom}}
\def\Proj{\operatorname{Proj}}
\def\bsf{{\boldsymbol{f}}}
\def\fraka{\mathfrak{a}}
\def\frakp{\mathfrak{p}}
\def\FF{\mathbb{F}}
\def\NN{\mathbb{N}}
\def\PP{\mathbb{P}}
\def\QQ{\mathbb{Q}}
\def\ZZ{\mathbb{Z}}
\def\calD{\mathcal{D}}
\def\calF{\mathcal{F}}
\def\calM{\mathcal{M}}
\def\phi{\varphi}
\def\del{\partial}
\def\tilde{\widetilde}
\def\to{\longrightarrow}
\def\mapsto{\longmapsto}
\renewcommand{\mod}{\,\operatorname{mod}\,}
\begin{document}
\title[Local cohomology of a smooth $\mathbb{Z}$-algebra]{Local cohomology modules of a smooth $\mathbb{Z}$-algebra have finitely many associated primes}

\author[Bhatt]{Bhargav Bhatt}
\address{School of Mathematics, Institute for Advanced Study, Einstein Drive, Princeton,
\newline NJ~08540, USA}
\email{bhargav.bhatt@gmail.com}

\author[Blickle]{Manuel Blickle}
\address{Institut f\"ur Mathematik, Fachbereich 08, Johannes Gutenberg-Universit\"at Mainz,
\newline 55099~Mainz, Germany}
\email{blicklem@uni-mainz.de}

\author[Lyubeznik]{Gennady Lyubeznik}
\address{Department of Mathematics, University of Minnesota, 127 Vincent Hall, 206 Church~St.,
\newline Minneapolis, MN~55455, USA}
\email{gennady@math.umn.edu}

\author[Singh]{Anurag K. Singh}
\address{Department of Mathematics, University of Utah, 155 South 1400 East, Salt Lake City,
\newline UT~84112, USA}
\email{singh@math.utah.edu}

\author[Zhang]{Wenliang Zhang}
\address{Department of Mathematics, University of Nebraska, 203 Avery Hall, Lincoln,
\newline NE~68588, USA}
\email{wzhang15@unl.edu}

\thanks{B.B.~was supported by NSF grants DMS~1160914 and DMS 1128155, M.B.~by DFG grants SFB/TRR45 and his Heisenberg Professorship, G.L.~by NSF grant DMS~1161783, A.K.S.~by NSF grant DMS~1162585, and W.Z.~by NSF grant DMS~1068946. A.K.S. thanks Uli Walther for several valuable discussions. The authors are grateful to the American Institute of Mathematics (AIM) for supporting their collaboration. All authors were also supported by NSF grant~0932078000 while in residence at MSRI}

\subjclass[2000]{Primary 13D45; Secondary 13F20, 14B15, 13N10, 13A35.}

\begin{abstract}
Let $R$ be a commutative Noetherian ring that is a smooth $\ZZ$-algebra. For each ideal $\fraka$ of $R$ and integer $k$, we prove that the local cohomology module $H^k_\fraka(R)$ has finitely many associated prime ideals. This settles a crucial outstanding case of a conjecture of Lyubeznik asserting this finiteness for local cohomology modules of all regular rings.
\end{abstract}
\maketitle

\section{Introduction}

A question of Huneke~\cite[Problem~4]{Huneke:Sundance} asks whether local cohomology modules of Noetherian rings have finitely many associated prime ideals. The answer is negative in general: the first counterexample was given by Singh~\cite[\S\!~4]{Singh:MRL}, and further counterexamples were obtained by Katzman~\cite{Katzman} and Singh and Swanson~\cite{SS:IMRN}. 

However, there are several affirmative answers: by work of Huneke and Sharp~\cite{HS:TAMS}, for regular rings~$R$ of prime characteristic; by work of Lyubeznik, for regular local and affine rings of characteristic zero~\cite{Lyubeznik:Invent}, and for unramified regular local rings of mixed characteristic~\cite{Lyubeznik:Comm}; for a partial result in the case of  ramified regular local rings, see N\'u\~nez-Betancourt~\cite{Nunez}. These results support Lyubeznik's conjecture,~\cite[Remark~3.7]{Lyubeznik:Invent}:

\begin{conjecture}
If $R$ is a regular ring, then each local cohomology module $H_\fraka^k(R)$ has finitely many associated prime ideals.
\end{conjecture}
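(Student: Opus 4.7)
The plan is to reduce to already-known cases by splitting $\Ass_R\bigl(H^k_\fraka(R)\bigr)$ according to residue characteristic. The conjecture is known for regular rings of prime characteristic by Huneke--Sharp and for regular local and affine rings containing $\QQ$ by Lyubeznik, so the essentially new case to address is when $R$ is a smooth $\ZZ$-algebra, which is what I will focus on.

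Every associated prime $\frakp\subset R$ meets $\ZZ$ in a prime ideal, either $(0)$ or $(p)$ for some rational prime $p$; so
\[
\Ass_R H^k_\fraka(R)
=\bigl\{\frakp : \frakp\cap\ZZ=(0)\bigr\}
\;\cup\;\bigcup_{p}\Ass_R\bigl(H^k_\fraka(R)[p]\bigr),
\]
because a prime $\frakp$ containing $p$ is associated to $H^k_\fraka(R)$ if and only if it is associated to the $p$-torsion submodule $H^k_\fraka(R)[p]$. Characteristic-zero primes coincide with $\Ass_{R_\QQ} H^k_{\fraka R_\QQ}(R_\QQ)$ for the smooth $\QQ$-algebra $R_\QQ=R\otimes_\ZZ\QQ$, and this set is finite by Lyubeznik's theorem. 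For a fixed prime $p$, the long exact sequence arising from $0\to R\to R\to R/pR\to 0$ (multiplication by $p$) yields a surjection $H^{k-1}_\fraka(R/pR)\twoheadrightarrow H^k_\fraka(R)[p]$; since $R/pR$ is a smooth, and hence regular, $\FF_p$-algebra, $H^{k-1}_\fraka(R/pR)$ is a finitely generated unit $F$-module in Lyubeznik's sense and in particular has only finitely many associated primes. I would argue that the Bockstein surjection respects enough of the Frobenius structure to conclude that $H^k_\fraka(R)[p]$ also has finitely many associated primes, either by showing the kernel $H^{k-1}_\fraka(R)/pH^{k-1}_\fraka(R)$ is an $F$-submodule or by verifying a direct subquotient finiteness property.

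The main obstacle, and where the bulk of the new work lies, is to show that only finitely many primes $p$ contribute a nonempty set $\Ass_R\bigl(H^k_\fraka(R)[p]\bigr)$. My approach would be a spreading-out argument: choose an integer $N$ so that an integral model of $H^k_{\fraka R_\QQ}(R_\QQ)$ as a holonomic $\calD(R_\QQ/\QQ)$-module extends to a module over $\calD(R[1/N]/\ZZ[1/N])$ whose set of associated primes is controlled. For $p\nmid N$ I would then show that the $p$-torsion $H^k_\fraka(R)[p]$ contributes no associated primes beyond a fixed finite list, because the Bockstein map arises from the integral $\calD$-module structure reduced mod $p$. This uniform control across all primes $p$ is the technical heart of the proof, and is where mixed-characteristic $\calD$-module or $p$-adic technology beyond the known single-characteristic theorems will be essential.
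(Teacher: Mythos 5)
First, note that the statement is a \emph{conjecture}: the paper does not prove it in general, and neither do you. Your opening reduction (``the essentially new case is a smooth $\ZZ$-algebra'') overstates what is known — even after this paper the conjecture remains open, e.g.\ for ramified regular local rings of mixed characteristic. So at best you are proposing a proof of the paper's main theorem (the smooth $\ZZ$-algebra case), and I will assess it as such. Your decomposition of $\Ass_R H^k_\fraka(R)$ by residue characteristic, the use of Lyubeznik's characteristic-zero theorem for the primes over $(0)$, and the Bockstein surjection $H^{k-1}_\fraka(R/pR)\twoheadrightarrow H^k_\fraka(R)[p]$ for a fixed $p$ all match the paper. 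But there are two gaps, one of them at the crux.

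The fatal gap is exactly where you place it: showing that $H^k_\fraka(R)[p]\neq 0$ for only finitely many $p$. Your spreading-out idea cannot work as described, because the $p$-torsion is precisely the part of $H^k_\fraka(R)$ that dies upon tensoring with $\QQ$; an integral model over $\ZZ[1/N]$ of the holonomic $\calD(R_\QQ)$-module $H^k_\fraka(R_\QQ)$ carries no information about it, and no amount of control on the generic fiber rules out torsion at infinitely many $p$ (indeed, for a non-regular hypersurface over $\ZZ$ this fails: Singh's example has $p$-torsion for every $p$). The paper's mechanism is entirely different and more elementary: $p$-torsion in $H^k_\fraka(R)$ is controlled by $p$-torsion in the Koszul cohomology $H^k(\bsf;R)$, which is finitely generated over $R$ and hence has $p$-torsion for only finitely many $p$. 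Concretely, if $p$ is a nonzerodivisor on $H^k(\bsf;R)$ then $H^{k-1}(\bsf;R)\to H^{k-1}(\bsf;R/pR)$ is surjective; since the image of Koszul cohomology in $H^{k-1}_\fraka(R/pR)$ generates it as a $\calD(R/pR)$-module (via the $\calF$-module generating morphism and \cite[Corollary~4.4]{ABL}), and since $\image\big(H^{k-1}_\fraka(R)\to H^{k-1}_\fraka(R/pR)\big)$ is a $\calD(R/pR)$-submodule — here one needs $\calD(R)/p\calD(R)\cong\calD(R/pR)$, which is where smoothness over $\ZZ$ enters — the map $\phi$ is surjective and hence $p$ is a nonzerodivisor on $H^k_\fraka(R)$. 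A second, smaller gap: for fixed $p$, you cannot expect the kernel of the Bockstein surjection to be an $F$-submodule, so ``finitely many associated primes'' does not pass to the quotient via $F$-module theory directly. The correct statement is that $H^{k-1}_\fraka(R/pR)$ has finite length as a $\calD(R/pR)$-module, the surjection onto $\image(d)=H^k_\fraka(R)[p]$ is $\calD(R/pR)$-linear, and every simple $\calD(R/pR)$-module has a unique associated prime; that is how the paper closes this step.
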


While the counterexamples from~\cite{Katzman} and~\cite{SS:IMRN} are for rings containing a field, the local cohomology module with infinitely many associated primes from~\cite{Singh:MRL} has the form~$H^k_\fraka(R)$ where $R$ is a hypersurface over the integers; in this example, $H^k_I(R)$ has nonzero $p$-torsion for each prime integer $p$. A major stumbling block in making progress with Lyubeznik's conjecture for rings not containing a field was the possibility of $p$-torsion for infinitely many prime integers $p$. The key point in this paper is to show that for a smooth $\ZZ$-algebra~$R$, the $p$-torsion of each local cohomology module $H^k_\fraka(R)$ can be controlled; this allows us to settle an important case of Lyubeznik's conjecture: 

\begin{theorem}
\label{theorem:main}
Let $R$ be a smooth $\ZZ$-algebra, $\fraka$ an ideal of $R$, and $k$ a nonnegative integer. Then the set of associated primes of the local cohomology module $H^k_\fraka(R)$ is finite.
\end{theorem}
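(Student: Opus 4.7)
The plan is to partition $\Ass(H)$, where $H := H^k_\fraka(R)$, according to the contraction of primes to $\Spec\ZZ$: each $\frakP \in \Spec R$ either lies over the generic point $(0)$ or over a closed point $(p)$ for a unique prime integer $p$, so it suffices to bound each class separately.

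For primes of the first type, I would pass to the generic fiber $R_\QQ := R \otimes_\ZZ \QQ$, a smooth $\QQ$-algebra and hence a regular ring of characteristic zero. Since localization commutes with local cohomology, $H \otimes_\ZZ \QQ \cong H^k_{\fraka R_\QQ}(R_\QQ)$, and Lyubeznik's characteristic-zero finiteness theorem applied to this regular ring gives that this module has only finitely many associated primes. Under the flat base change $R \to R_\QQ$ these correspond bijectively to the associated primes of $H$ that do not contain any prime integer, so this contribution is finite.

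For primes of the second type, each associated prime $\frakP$ of $H$ containing a given $p$ already lies in $\Ass(H[p])$, where $H[p]$ denotes the $p$-torsion submodule. Applying $H^*_\fraka(-)$ to $0 \to R \xrightarrow{\cdot p} R \to R/pR \to 0$ yields, via the connecting map, a surjection $H^{k-1}_\fraka(R/pR) \twoheadrightarrow H[p]$. Because $R/pR$ is a smooth $\FF_p$-algebra, the Huneke--Sharp theorem bounds $\Ass(H^{k-1}_\fraka(R/pR))$ by a finite set, and I would extend this finiteness to the quotient $H[p]$ by invoking Lyubeznik's theory of $F$-finite modules over $R/pR$, in which such quotients remain $F$-finite and hence also have only finitely many associated primes. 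This handles one prime $p$ at a time.

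The main obstacle, and the genuinely novel content of the theorem, is to show that $H[p]$ is nonzero for only finitely many primes $p$. My plan is to exploit the $\calD_{R/\ZZ}$-module structure on $H$ together with a generic-flatness-type argument over $\Spec\ZZ$: for all but finitely many $p$, multiplication by $p$ on $H$ should be injective, so that $H[p]=0$ and no associated prime of $H$ contains $p$. Carrying this out rigorously---reconciling mixed-characteristic $\calD$-module techniques with the $F$-module theory available in positive characteristic, while tracking how the local cohomology modules spread out across the fibres of $\Spec R \to \Spec\ZZ$---is where I expect the main technical difficulty to reside.
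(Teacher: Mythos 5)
Your overall architecture matches the paper's: partition $\Ass(H)$ by contraction to $\Spec\ZZ$, handle the generic point via Lyubeznik's characteristic-zero theorem for $R\otimes_\ZZ\QQ$, handle each closed point $(p)$ via the surjection $H^{k-1}_\fraka(R/pR)\twoheadrightarrow H[p]=\image(d)$ coming from the long exact sequence, and reduce the whole problem to showing that $H[p]\neq 0$ for only finitely many $p$. But that last reduction is precisely where the theorem's content lies, and you have not proved it: you only announce a ``generic-flatness-type argument'' and defer the difficulty. Generic flatness over $\Spec\ZZ$ applies to finitely generated modules, and $H^k_\fraka(R)$ is not finitely generated over $R$, so there is no direct way to spread out injectivity of multiplication by $p$ from the generic fiber. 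The paper's mechanism is different and essential: fix generators $\bsf$ of $\fraka$, and use that the Koszul-to-\v Cech comparison map realizes $H^{k-1}(\bsf;R/pR)\to H^{k-1}_\fraka(R/pR)$ as a generating morphism of $\calF$-modules, so that by \`Alvarez Montaner--Blickle--Lyubeznik the image of Koszul cohomology generates $H^{k-1}_\fraka(R/pR)$ as a $\calD(R/pR)$-module. Combined with the isomorphism $\calD(R)/p\calD(R)\cong\calD(R/pR)$ (Lemma~\ref{lemma:dmodp}, where smoothness over $\ZZ$ is used), a diagram chase shows: if $p$ is a nonzerodivisor on the \emph{finitely generated} module $H^k(\bsf;R)$, then $p$ is a nonzerodivisor on $H^k_\fraka(R)$. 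Since $H^k(\bsf;R)$ has finitely many associated primes, only finitely many $p$ survive. Without some substitute for this transfer from Koszul to local cohomology, your proof has a genuine gap at its central step.

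A secondary imprecision: for a fixed $p$, you assert that $H[p]$ ``remains $F$-finite'' as a quotient of $H^{k-1}_\fraka(R/pR)$. The quotient here is by $\image(\phi)$, which is a $\calD(R/pR)$-submodule but not visibly an $\calF$-submodule, so $F$-finiteness of the quotient is not automatic. The correct route (the one the paper takes) is that $H^{k-1}_\fraka(R/pR)$ has finite length in the category of $\calD(R/pR)$-modules, hence so does its $\calD$-module quotient $\image(d)$, and each simple $\calD(R/pR)$-module has a unique associated prime; this yields the finiteness of $\Ass(H[p])$. This step is repairable, but as written it leans on a claim that is not established.
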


Our proof uses $\calD$-modules over $\ZZ$, $\FF_p$, and $\QQ$, along with the theory of~$\calF$-modules developed in \cite{Lyubeznik:Crelle}. The relevant results are reviewed in~\S\!~\ref{section:DF}. A crucial step in the proof is to relate the integer torsion in a local cohomology module to the integer torsion in a Koszul cohomology module; since the latter is finitely generated, it has $p$-torsion for at most finitely many $p$. The proof of the main theorem occupies \S\!~\ref{section:proof}.

Our techniques work somewhat more generally: in \S\!~\ref{section:dedekind} we indicate the changes that need to be made to tackle the case where $R$ is a smooth algebra over a Dedekind domain, all of whose residue fields at nonzero prime ideals are of characteristic $p$. Our techniques are also sufficient to give a new and much simpler proof of the case of an unramified regular local ring of mixed characteristic, originally obtained by Lyubeznik in \cite{Lyubeznik:Comm}.

\section{$\calD$-modules and $\calF$-modules}
\label{section:DF}

\subsection{$\calD$-modules}
\label{subsection:D}

Let $R$ be a commutative ring. \emph{Differential operators} on $R$ are defined inductively as follows: for each $r\in R$, the multiplication by $r$ map $\tilde{r}\colon R\to R$ is a differential operator of order $0$; for each positive integer $n$, the differential operators of order less than or equal to $n$ are those additive maps $\delta\colon R\to R$ for which the commutator
\[
[\tilde{r},\delta]\ =\ \tilde{r}\circ\delta-\delta\circ\tilde{r}
\]
is a differential operator of order less than or equal to $n-1$. If $\delta$ and $\delta'$ are differential operators of order at most $m$ and $n$ respectively, then $\delta\circ\delta'$ is a differential operator of order at most $m+n$. Thus, the differential operators on~$R$ form a subring $\calD(R)$ of $\End_\ZZ(R)$.

When $R$ is an algebra over a commutative ring $A$, we define $\calD(R,A)$ to be the subring of~$\calD(R)$ consisting of differential operators that are $A$-linear. Note that $\calD(R,\ZZ)=\calD(R)$; if $R$ is an algebra over a perfect field $\FF$ of prime characteristic, then $\calD(R,\FF)=\calD(R)$, see, for example, \cite[Example~5.1~(c)]{Lyubeznik:Crelle}.

By a $\calD(R,A)$-module, we mean a \emph{left} $\calD(R,A)$-module. Since $\calD(R,A)\subseteq\End_A(R)$, the ring $R$ has a natural $\calD(R,A)$-module structure. Using the quotient rule, localizations of~$R$ also carry a natural $\calD(R,A)$-structure. Let $\fraka$ be an ideal of $R$. The \v Cech complex on a generating set for $\fraka$ is a complex of $\calD(R,A)$-modules; it then follows that each local cohomology module $H^k_{\fraka}(R)$ is a $\calD(R,A)$-module.

More generally, if $M$ is a $\calD(R,A)$-module, then each local cohomology module $H^k_{\fraka}(M)$ is also a $\calD(R,A)$-module, see~\cite[Examples 2.1~(iv)]{Lyubeznik:Invent} or \cite[Example~5.1~(b)]{Lyubeznik:Crelle}.

If $R$ is a polynomial or formal power series ring in variables $x_1,\dots,x_d$ over a commutative ring $A$, then $\frac{1}{{t_i}!}\frac{\del^{t_i}}{\del x_i^{t_i}}$ can be viewed as a differential operator on $R$ even if the integer $t_i!$ is not invertible. In each of these cases, $\calD(R,A)$ is the free~$R$-module with basis
\[
\frac{1}{{t_1}!}\frac{\del^{t_1}}{\del x_1^{t_1}}\ \cdots\ \frac{1}{{t_d}!}\frac{\del^{t_d}}{\del x_d^{t_d}}
\qquad\text{ for } \ (t_1,\dots,t_d)\in\NN^d\,,
\]
see~\cite[Th\'eor\`eme~16.11.2]{EGA4}. If $B$ is an $A$-algebra, it follows that
\[
\calD(R,A)\otimes_AB\ \cong\ \calD(R\otimes_AB\,,\,B)\,.
\]
Specifically, for each element $a\in A$, one has
\begin{equation}
\label{equation:dmodp}
\calD(R,A)/a\calD(R,A)\ \cong\ \calD(R/aR\,,\,A/aA)\,.
\end{equation}

To obtain analogous results for any smooth $A$-algebra, we use an alternative description of $\calD(R,A)$ from~\cite[16.8]{EGA4}: consider the left $R\otimes_AR$-module structure on $\End_A(R)$ under which $r\otimes s$ acts on $\delta$ to give the endomorphism $\tilde{r}\circ\delta\circ\tilde{s}$ where, as before, $\tilde{r}$ denotes the multiplication by $r$ map. Set $\Delta_{R/A}$ to be the kernel of the ring homomorphism $R\otimes_AR\to R$ with $r\otimes s\mapsto rs$. The ideal $\Delta_{R/A}$ is generated by elements of the form $r\otimes\!1-1\otimes r$. Since
\[
(r\otimes\!1-1\otimes r)(\delta)\ =\ [\tilde{r},\delta]\,,
\]
it follows that an element $\delta$ of $\End_A(R)$ is a differential operator of order at most $n$ precisely if it is annihilated by $\Delta^{n+1}_{R/A}$. By~\cite[Proposition~16.8]{EGA4}, the $A$-linear differential operators on $R$ of order at most~$n$ correspond to
\[
\Hom_{R\otimes_AR}\big((R\otimes_AR)/\Delta^{n+1}_R,\,\End_A(R)\big)\ \cong\ \Hom_R(P^n_{R/A},\,R)\,,
\]
where
\[
P^n_{R/A}\ =\ (R\otimes_AR)/\Delta^{n+1}_R\,,
\]
viewed as a left $R$-module via $r\mapsto r\otimes\!1$.

A ring $R$ is said to be \emph{smooth} over $A$ if $R$ is a finitely presented and flat $A$-algebra, such that for each prime ideal $\frakp$ of $A$, the fiber $R_\frakp/\frakp R_\frakp$ is geometrically regular over $A_\frakp/\frakp A_\frakp$. In this situation, we have:

\begin{lemma}
\label{lemma:dmodp}
If $R$ is a smooth $A$-algebra, then for each $A$-algebra $B$ one has
\[
\calD(R,A)\otimes_AB\ \cong\ \calD(R\otimes_AB\,,\,B)\,.
\]
\end{lemma}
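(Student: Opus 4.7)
The plan is to verify the isomorphism one order at a time in the natural filtration on $\calD(R,A)$. For each $n \geq 0$, let $\calD^n(R,A)$ denote the $R$-submodule of $A$-linear differential operators of order at most $n$; the excerpt identifies
\[
\calD^n(R,A) \;\cong\; \Hom_R(P^n_{R/A}, R),
\]
where $P^n_{R/A} = (R \otimes_A R)/\Delta_{R/A}^{n+1}$ is regarded as a left $R$-module via the first factor. Since $\calD(R,A) = \bigcup_n \calD^n(R,A)$ and $-\otimes_A B$ commutes with filtered colimits, it suffices to produce compatible isomorphisms $\calD^n(R,A) \otimes_A B \cong \calD^n(R \otimes_A B, B)$ for each $n$ and pass to the union.

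The first key input is that smoothness of $R/A$ forces $P^n_{R/A}$ to be a finitely generated projective $R$-module; this is classical (see, e.g., \cite[16.10]{EGA4}), and may be obtained by induction on $n$ using the canonical short exact sequence
\[
0 \to \operatorname{Sym}_R^n \Omega_{R/A}^1 \to P^n_{R/A} \to P^{n-1}_{R/A} \to 0
\]
together with the fact that $\Omega_{R/A}^1$ is itself a finitely generated projective $R$-module in the smooth case. Granting this, the standard base-change formula $\Hom_R(P,R)\otimes_R R' \cong \Hom_{R'}(P\otimes_R R',R')$ for finitely generated projective $P$, applied with $R' := R\otimes_A B$, combined with the identification $M \otimes_R R' = M \otimes_A B$ for $R$-modules $M$, gives
\[
\calD^n(R,A) \otimes_A B \;\cong\; \Hom_{R'}\!\bigl(P^n_{R/A} \otimes_A B,\, R'\bigr).
\]

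The second ingredient is that the diagonal ideal commutes with base change. The canonical isomorphism $(R \otimes_A R) \otimes_A B \cong R' \otimes_B R'$ carries each generator $r \otimes 1 - 1 \otimes r$ of $\Delta_{R/A}$ to the corresponding generator $(r\otimes 1) \otimes 1 - 1 \otimes (r \otimes 1)$ of $\Delta_{R'/B}$, while the remaining generators $(1 \otimes b) \otimes 1 - 1 \otimes (1 \otimes b)$ of $\Delta_{R'/B}$ vanish because $B$ sits in the base. Taking $(n{+}1)$-st powers and quotients yields $P^n_{R/A} \otimes_A B \cong P^n_{R'/B}$. Substituting gives $\calD^n(R,A) \otimes_A B \cong \Hom_{R'}(P^n_{R'/B}, R') \cong \calD^n(R', B)$, and taking the colimit in $n$ completes the argument.

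The main obstacle is the projectivity of $P^n_{R/A}$ over $R$: this is a genuine structural theorem, not a formal manipulation, and is precisely where smoothness is used. Everything else — base change for the diagonal ideal, base change for $\Hom$ out of a finitely generated projective, and commuting a filtered colimit with tensor — is purely formal. In the polynomial and formal power series cases already handled in the excerpt, the divided-derivative basis makes $P^n_{R/A}$ manifestly free, so the content of the lemma over a general smooth $A$-algebra is really the upgrade from ``free on a canonical basis'' to ``finitely generated projective.''
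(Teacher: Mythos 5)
Your proposal is correct and follows essentially the same route as the paper: both rest on the projectivity (local freeness) of $P^n_{R/A}$ from smoothness via \cite[16.10]{EGA4}, base change for $\Hom$ out of a finitely generated projective module, and the identification $P^n_{R/A}\otimes_AB\cong P^n_{R_B/B}$. Your generator-tracking argument for the diagonal ideal is just a rephrasing of the paper's observation that $\Delta^{n+1}_{R/A}\otimes_AB$ surjects onto $\Delta^{n+1}_{R_B/B}$.
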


\begin{proof}
Since $R$ is $A$-smooth, the $R$-module $P^n_{R/A}$ is locally free of finite rank by~\cite[Proposition~16.10.2]{EGA4}. It follows that 
\begin{equation}
\label{equation:lemma}
\Hom_R(P^n_{R/A}\,,\,R)\otimes_AB\ \cong\ \Hom_{R_B}(P^n_{R/A}\otimes_AB\,,\,R_B)\,,
\end{equation}
where $R_B=R\otimes_AB$. Since $R$ is flat over $A$, one also has $\Delta_{R/A}\otimes_AB\cong\Delta_{R_B/B}$. Tensoring the exact sequence
\[
\CD
0@>>>\Delta^{n+1}_{R/A}@>>>R\otimes_AR@>>>P^n_{R/A}@>>>0
\endCD
\]
with $B$, one obtains the first row of the commutative diagram
\[
\CD
@.\Delta^{n+1}_{R/A}\otimes_AB@>>>R_B\otimes_BR_B@>>>P^n_{R/A}\otimes_AB@>>>0\phantom{\,.}\\
@.@VVV@|@VVV\\
0@>>>\Delta^{n+1}_{R_B/B}@>>>R_B\otimes_BR_B@>>>P^n_{R_B/B}@>>>0\,.
\endCD
\]
The vertical map on the left is surjective, which gives $P^n_{R/A}\otimes_AB\cong P^n_{R_B/B}$. Combining this with~\eqref{equation:lemma}, we get the desired isomorphism
\[
\Hom_R(P^n_{R/A}\,,\,R)\otimes_AB\ \cong\ \Hom_{R_B}( P^n_{R_B/B}\,,\,R_B)\,.\qedhere
\]
\end{proof}

\subsection{$\calF$-modules}
\label{subsection:F}

We next review some aspects of the theory of $\calF$-modules, developed by Lyubeznik in~\cite{Lyubeznik:Crelle}. Let $R$ be an $F$-finite regular ring of prime characteristic $p$. For each positive integer~$e$, define $R^{(e)}$ to be the $R$-bimodule that agrees with $R$ as a left $R$-module, and that has the right $R$-action
\[
r'r=r^{p^e}r'\qquad\text{ for }r\in R\text{ and }r'\in R^{(e)}\,.
\]
For an $R$-module $M$, define $F(M)=R^{(1)}\otimes_R M$; we view this as an $R$-module via the left $R$-module structure on $R^{(1)}$.

An \emph{$\calF$-module} is an $R$-module $\calM$ with an $R$-module isomorphism $\theta\colon\calM\to F(\calM)$. The ring $R$ has a natural $\calF$-module structure, and so does each local cohomology module~$H^k_\fraka(R)$, see~\cite[Example~1.2]{Lyubeznik:Crelle}. An $\calF$-module carries a natural $\calD(R)$-module structure by~\cite[pages~115--116]{Lyubeznik:Crelle}. When the $\calF$-module $\calM$ is the ring $R$, a localization of $R$, or a local cohomology module $H^k_\fraka(R)$, the usual $\calD(R)$-module structure on~$\calM$ agrees with the one induced via the $\calF$-module structure; see~\cite[Example~5.2~(c)]{Lyubeznik:Crelle}.

A \emph{generating morphism} for an $\calF$-module $\calM$ is an $R$-module map $\beta\colon M\to F(M)$ such that $\calM$ is the direct limit of the top row of the commutative diagram
\[
\CD
M@>\beta>>F(M)@>F(\beta)>>F^2(M)@>F^2(\beta)>>\cdots\\
@V\beta VV @VF(\beta)VV @VF^2(\beta)VV\\
F(M)@>F(\beta)>>F^2(M)@>F^2(\beta)>>F^3(M)@>F^3(\beta)>>\cdots\,.
\endCD
\]
Note that the direct limit of the bottom row is $F(\calM)$, and that the vertical maps induce the isomorphism $\theta\colon\calM\to F(\calM)$. If $\beta\colon M\to F(M)$ is a generating morphism for $\calM$, then the image of $M$ in $\calM$ generates $\calM$ as a $\calD(R)$-module by~\cite[Corollary~4.4]{ABL}; this is a key ingredient in the proof of our main result.

\subsection{Koszul and local cohomology}
\label{subsection:koszul}

Given $f\in R$, there is a map of complexes
\[
\CD
K^\bullet(f;R) @.\qquad=\qquad @. 0@>>>R@>f>>R@>>>0\phantom{\,,}\\
@VVV @. @. @| @VVf^{p-1}V\\ 
K^\bullet(f^p;R) @.=@. 0@>>>R@>f^p>>R@>>>0\phantom{\,,}\\
@VVV @. @. @| @VV\frac{1}{f^p}V\\ 
C^\bullet(f;R) @.=@. 0@>>>R@>>>R_f@>>>0\,,
\endCD
\]
where $K^\bullet$ denotes the Koszul complex, and $C^\bullet$ the \v Cech complex. Let $\bsf=f_1,\dots,f_t$ be a sequence of elements of $R$. Regarding $K^\bullet(\bsf;R)$ and $C^\bullet(\bsf;R)$ as the tensor products
\[
K^\bullet(f_1;R)\otimes\cdots\otimes K^\bullet(f_t;R)
\quad\text{ and }\quad
C^\bullet(f_1;R)\otimes\cdots\otimes C^\bullet(f_t;R)
\]
respectively, one obtains a map of complexes
\[
\CD
K^\bullet(\bsf;R)@>>>K^\bullet(\bsf^p;R)@>>>C^\bullet(\bsf;R)\,,
\endCD
\]
and induced maps on cohomology modules
\[
\CD
H^k(\bsf;R)@>\beta>>H^k(\bsf^p;R)@>>>H^k_\fraka(R)\,,
\endCD
\]
where $\fraka$ is the ideal generated by $\bsf$. By~\cite[Proposition~1.11~(b)]{Lyubeznik:Crelle}, the map~$\beta$ is a generating homomorphism for the local cohomology module $H^k_\fraka(R)$; hence the image of~$H^k(\bsf;R)$ in $H^k_\fraka(R)$ generates $H^k_\fraka(R)$ as a $\calD(R)$-module, as mentioned at the end of~\S\!~\ref{subsection:F}.

\section{The main theorem}
\label{section:proof}

We prove the following result that subsumes~Theorem~\ref{theorem:main}.

\begin{theorem}
\label{theorem:integer}
Let $R$ be a smooth $\ZZ$-algebra, and $\fraka$ an ideal of $R$ generated by elements $\bsf=f_1,\dots,f_t$. Let $k$ be a nonnegative integer.
\begin{enumerate}[\quad\rm(1)]
\item If a prime integer is a nonzerodivisor on the Koszul cohomology module $H^k(\bsf;R)$, then it is a nonzerodivisor on the local cohomology module $H^k_\fraka(R)$.

\item All but finitely many prime integers are nonzerodivisors on $H^k_\fraka(R)$.

\item The set of associated primes of the $R$-module $H_\fraka^k(R)$ is finite.
\end{enumerate}
\end{theorem}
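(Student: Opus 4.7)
The plan is to prove the three parts in order; part~(2) follows at once from part~(1) and the finiteness of Koszul cohomology, and part~(3) combines part~(2) with known finiteness results in equal characteristic. The heart of the argument is part~(1), which transfers $p$-torsion information from Koszul to local cohomology by exploiting the generating-morphism framework of \S\ref{subsection:koszul}.

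For part~(1), let $\bar R = R/pR$, a smooth (hence $F$-finite regular) $\FF_p$-algebra. Since $R$ is $\ZZ$-flat, tensoring $0 \to R \xrightarrow{\cdot p} R \to \bar R \to 0$ with the Koszul complex $K^\bullet(\bsf;R)$ and the \v Cech complex $C^\bullet(\bsf;R)$ yields short exact sequences of $\calD(R,\ZZ)$-complexes, giving $\calD(R,\ZZ)$-linear connecting maps $\partial_K$ and $\partial_C$ whose images are exactly the $p$-torsions of $H^k(\bsf;R)$ and $H^k_\fraka(R)$. Naturality of the snake lemma, applied to the comparison map $K^\bullet(\bsf;R) \to C^\bullet(\bsf;R)$, produces a commutative square
\[
\CD
H^{k-1}(\bsf;\bar R) @>\partial_K>> H^k(\bsf;R)\\
@Vv_1VV @VVv_2V\\
H^{k-1}_\fraka(\bar R) @>\partial_C>> H^k_\fraka(R)\,.
\endCD
\]
Under the hypothesis on $p$, one has $\partial_K = 0$, so $\partial_C \circ v_1 = 0$. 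The image of $\partial_C$ lies in $M[p] := H^k_\fraka(R)[p]$, on which $\calD(R,\ZZ)$ acts through $\calD(R,\ZZ)/p\calD(R,\ZZ) \cong \calD(\bar R)$ by Lemma~\ref{lemma:dmodp}; hence $\partial_C$ is effectively $\calD(\bar R)$-linear. Since the image of $v_1$ generates $H^{k-1}_\fraka(\bar R)$ as a $\calD(\bar R)$-module (end of \S\ref{subsection:koszul}), we conclude $\partial_C \equiv 0$, so $M[p] = 0$.

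Part~(2) is immediate: $H^k(\bsf;R)$ is a subquotient of a finitely generated free $R$-module, hence has only finitely many associated primes, so only finitely many prime integers are zerodivisors on it; part~(1) forces every other prime integer to be a nonzerodivisor on $H^k_\fraka(R)$. For part~(3), decompose $\Ass_R H^k_\fraka(R)$ according to $\frakP \cap \ZZ$. Primes with $\frakP \cap \ZZ = (0)$ correspond bijectively to $\Ass_{R_\QQ} H^k_{\fraka R_\QQ}(R_\QQ)$, where $R_\QQ = R \otimes_\ZZ \QQ$ is a regular finitely generated $\QQ$-algebra; this set is finite by Lyubeznik's theorem for affine algebras of characteristic zero~\cite{Lyubeznik:Invent}. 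By part~(2) only finitely many~$p$ arise with $\frakP \cap \ZZ = (p)$, and for each such $p$ the associated primes of $H^k_\fraka(R)$ containing $p$ correspond to $\Ass_{\bar R} M[p]$. The long exact sequence produces a short exact sequence $0 \to H^{k-1}_\fraka(R)/pH^{k-1}_\fraka(R) \to H^{k-1}_\fraka(\bar R) \to M[p] \to 0$ of $\calD(\bar R)$-modules, presenting $M[p]$ as a quotient of the $\calF$-finite $\calF$-module $H^{k-1}_\fraka(\bar R)$, whose own associated primes are finite by \cite[Cor.~2.13]{Lyubeznik:Crelle}.

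The main obstacle is the final transfer of finiteness of $\Ass$ from $H^{k-1}_\fraka(\bar R)$ to its quotient $M[p]$; in general associated primes are not inherited by quotients. The plan is to upgrade the kernel---the image of the mod-$p$ reduction map $H^{k-1}_\fraka(R) \to H^{k-1}_\fraka(\bar R)$---to an $\calF$-submodule of $H^{k-1}_\fraka(\bar R)$, which would give $M[p]$ the structure of an $\calF$-finite $\calF$-module and hence finitely many associated primes via \cite[Cor.~2.13]{Lyubeznik:Crelle}. Proving this $\calF$-stability---reconciling the Frobenius structure on $H^{k-1}_\fraka(\bar R)$ with the integral mod-$p$ reduction---is the delicate step, and I expect it to require an explicit description of the $\calF$-structure via the generating morphism of \S\ref{subsection:koszul} together with a Frobenius-lifting argument modulo~$p$.
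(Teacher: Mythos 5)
Your parts (1) and (2) are correct and essentially reproduce the paper's argument: your statement that $\partial_K=0$ forces $\partial_C=0$ because $\ker(\partial_C)$ is a $\calD(\bar R)$-submodule containing the $\calD(\bar R)$-module generators coming from Koszul cohomology is just the contrapositive packaging of the paper's proof that $\phi$ is surjective. Part (3) for the primes lying over $(0)$ also matches the paper.

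The genuine gap is exactly where you flag it: the finiteness of $\Ass_{\bar R} M[p]$, where $M[p]=\image(\partial_C)$ is presented as the quotient of $H^{k-1}_\fraka(\bar R)$ by $\image(\phi)$. Your plan is to put an $\calF$-module structure on this quotient by showing that $\image(\phi)$ is an $\calF$-submodule of $H^{k-1}_\fraka(\bar R)$, and then invoke finiteness of associated primes for $\calF$-finite $\calF$-modules. You have not proved this $\calF$-stability, and it is not at all clear that the image of the mod-$p$ reduction map is stable under the structure isomorphism $\theta\colon \calM\to F(\calM)$; nothing in the setup gives you a Frobenius acting compatibly on $H^{k-1}_\fraka(R)$ itself. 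So as written the argument for the primes over $(p)$ is incomplete.

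The missing idea is to work in the category of $\calD(\bar R)$-modules rather than $\calF$-modules, where no stability issue arises: $\image(\phi)$ is a $\calD(R)$-submodule of $H^{k-1}_\fraka(\bar R)$ killed by $p$, hence automatically a $\calD(\bar R)$-submodule via $\calD(R)/p\calD(R)\cong\calD(\bar R)$ (Lemma~\ref{lemma:dmodp}), so $M[p]$ is a $\calD(\bar R)$-module quotient of $H^{k-1}_\fraka(\bar R)$. By \cite[Corollary~5.10]{Lyubeznik:Crelle} the latter has finite length as a $\calD(\bar R)$-module, and finite length (unlike finiteness of associated primes, or the $\calF$-module structure) passes to quotients. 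One then finishes by observing that a finite-length $\calD(\bar R)$-module has finitely many associated primes, because a simple $\calD(\bar R)$-module $N$ has a unique associated prime: if $\frakp$ is maximal in $\Ass N$, then $H^0_\frakp(N)$ is a nonzero $\calD(\bar R)$-submodule, hence equals $N$, forcing $\Ass N=\{\frakp\}$. Substituting this for your $\calF$-stability step closes the gap; the rest of your write-up stands.
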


\begin{proof}
Let $p$ be a prime integer. The exact sequence
\[
\CD
0@>>>R@>p>>R@>>>R/pR@>>>0
\endCD
\]
induces an exact sequence of Koszul cohomology modules and an exact sequence of local cohomology modules; these fit into a commutative diagram:
\[
\CD
@>>>H^{k-1}(\bsf;R)@>\pi>>H^{k-1}(\bsf;R/pR)@>>>H^k(\bsf;R)@>p>>H^k(\bsf;R)@>>>\\
@. @V\alpha' VV @VV\alpha V @VVV @VVV \\
@>>>H^{k-1}_\fraka(R)@>\phi>>H^{k-1}_\fraka(R/pR)@>d>>H^k_\fraka(R)@>p>>H^k_\fraka(R)@>>>
\endCD
\]
The bottom row is a complex of $\calD(R)$-modules; in particular, $\phi\big(H^{k-1}_\fraka(R)\big)$ is a $\calD(R)$-submodule of $H^{k-1}_\fraka(R/pR)$. As $\phi\big(H^{k-1}_\fraka(R)\big)$ is annihilated by $p$, it has a natural structure as a module over the ring $\calD(R)/p\calD(R)$, which equals~$\calD(R/pR)$ by Lemma~\ref{lemma:dmodp}. Similarly,
\begin{equation}
\label{eqn:image:d}
\CD
H^{k-1}_\fraka(R/pR)@>d>>\image(d)
\endCD
\end{equation}
is a map of $\calD(R/pR)$-modules.

(1) Suppose $p$ is a nonzerodivisor on $H^k(\bsf;R)$. Then the map $\pi$ is surjective; we need to prove that $p$ is a nonzerodivisor on $H^k_\fraka(R)$, equivalently, that $\phi$ is surjective.

By~\S\!~\ref{subsection:koszul}, the image $M$ of $\alpha$ generates $H^{k-1}_\fraka(R/pR)$ as a $\calD(R/pR)$-module. As~$\pi$ is surjective, $M$ is also the image of $\alpha\circ\pi=\phi\circ\alpha'$. It follows that
\[
M\ \subseteq\ \phi\big(H^{k-1}_\fraka(R)\big)\,.
\]
But $\phi\big(H^{k-1}_\fraka(R)\big)$ is a $\calD(R/pR)$-submodule of $H^{k-1}_\fraka(R/pR)$ that contains $M$. Hence
\[
\phi\big(H^{k-1}_\fraka(R)\big)\ =\ H^{k-1}_\fraka(R/pR)\,,
\]
i.e., $\phi$ is surjective, as desired.

(2) Since $H^k(\bsf;R)$ is a finitely generated $R$-module, it has finitely many associated prime ideals. These finitely many prime ideals contain at most finitely many prime integers; all other prime integers are nonzerodivisors on $H^k(\bsf;R)$, and hence on $H^k_\fraka(R)$ by (1).

(3) We have proved that the set $\Ass_\ZZ H^k_\fraka(R)$ is finite; let $\frakp$ be an element of this set. It suffices to show that there are at most finitely many elements of~$\Ass_R H^k_\fraka(R)$ that lie over~$\frakp$.

If $\frakp$ is the zero ideal, then each associated prime of $H^k_\fraka(R)$ lying over $\frakp$ is the contraction of an associated prime of
\[
H^k_\fraka(R)\otimes_{\ZZ}\QQ\ =\ H^k_\fraka\big(R\otimes_\ZZ\QQ\big)
\]
as an $R\otimes_\ZZ\QQ$-module. Since $R\otimes_\ZZ\QQ$ is a regular finitely generated $\QQ$-algebra, these associated primes are finite in number by~\cite[Remark~3.7~(i)]{Lyubeznik:Invent}.

If $\frakp$ is generated by a prime integer $p$, the exactness of
\[
\CD
H^{k-1}_\fraka(R/pR)@>d>>H^k_\fraka(R)@>p>>H^k_\fraka(R)
\endCD
\]
shows that an associated prime of $H^k_\fraka(R)$ that contains $p$ is an associated prime of
\[
\ker(p)\ =\ \image (d)\,.
\]
It thus suffices to show that $\image(d)$ has finitely many associated primes as an $R$-module, or, equivalently, as an $R/pR$-module.

Recall that~\eqref{eqn:image:d} is a surjection of $\calD(R/pR)$-modules. By \cite[Corollary~5.10]{Lyubeznik:Crelle}, the module $H^{k-1}_\fraka(R/pR)$ has finite length as a~$\calD(R/pR)$-module, and hence so does $\image(d)$. The associated primes of $\image(d)$ are among the minimal primes of its simple $\calD(R/pR)$-module subquotients; it thus suffices to show that each simple $\calD(R/pR)$-module has a unique associated prime. Indeed, let $M$ be a simple $\calD(R/pR)$-module, and $\frakp$ a maximal element of $\Ass_{R/pR}M$. Then $H^0_\frakp(M)$ is a $\calD(R/pR)$-submodule of $M$, and hence it must equal $M$. But $\frakp$ is maximal in $\Ass_{R/pR}M$, so it is the unique associated prime of $M$.
\end{proof}

We conclude the section with two examples:

\begin{example}
Given a finite set of prime integers $S$, there exists a polynomial ring $R$ over~$\ZZ$, a monomial ideal $\fraka$ in $R$, and an integer $k$, such that $H^k_\fraka(R)$ has $p$-torsion if and only if~$p\in S$; see~\cite[Example~5.11]{SW:Crelle}.
\end{example}

\begin{example}
Let $E$ be an elliptic curve in $\PP^2_\QQ$. Consider the Segre embedding of~$E\times\PP^1_\QQ$ in $\PP_\QQ^5$, and let $\fraka$ be a lift of the defining ideal to $R=\ZZ[x_0,\dots,x_5]$, i.e.,
\[
\Proj\big(R/\fraka\otimes_\ZZ\QQ\big)\ =\ E\times\PP^1_\QQ\,.
\]
By~\cite[page~75]{HS} or~\cite[page~219]{Lyubeznik:Compositio}, the module $H^4_\fraka(R/pR)$ is zero for infinitely many prime integers $p$ (corresponding to $E\mod p$ being supersingular) and nonzero for infinitely many $p$ (corresponding to $E\mod p$ being ordinary); see also~\cite[Corollary~2.2]{SW:Contemp}. Thus,
\[
\CD
H^4_I(R)@>p>>H^4_I(R)
\endCD
\]
is surjective for infinitely many primes $p$, and also not surjective for infinitely many~$p$. Theorem~\ref{theorem:main} implies that the map is injective for all but finitely many primes $p$.
\end{example}

\section{Smooth algebras over a Dedekind domain}
\label{section:dedekind}

We indicate how Theorem~\ref{theorem:main} extends to algebras that are smooth over the ring of integers of a number field; first, the local version:

\begin{theorem}
\label{theorem:dvr}
Let $(V,uV)$ be a discrete valuation ring of mixed characteristic. Let $R$ be a $V$-algebra that is either smooth over $V$, or a formal power series ring over $V$.

Let $\fraka$ be an ideal of $R$ generated by elements $\bsf$.
\begin{enumerate}[\quad\rm(1)]
\item If $u$ is a nonzerodivisor on $H^k(\bsf;R)$, then it is a nonzerodivisor on $H^k_\fraka(R)$.

\item The $R$-module $H_\fraka^k(R)$ has finitely many associated prime ideals.
\end{enumerate}
\end{theorem}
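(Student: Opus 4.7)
The strategy is to reprise the proof of Theorem~\ref{theorem:integer} with $\ZZ$, $p$, $\FF_p$, and $\QQ$ replaced by $V$, $u$, $V/uV$, and $K := V[u^{-1}]$, respectively. Three structural ingredients from that proof must be checked in the new setting: (a) the identification $\calD(R)/u\calD(R) \cong \calD(R/uR)$, as rings of operators on any $u$-torsion $\calD(R)$-module; (b) the generating-morphism property ensuring that the image of $H^{k-1}(\bsf; R/uR) \to H^{k-1}_\fraka(R/uR)$ generates the target as a $\calD(R/uR)$-module; and (c) the finite length of $H^{k-1}_\fraka(R/uR)$ as a $\calD(R/uR)$-module. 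Ingredients (b) and (c) both rest on the $\calF$-module machinery of~\S\!~\ref{subsection:F} applied to $R/uR$, and so require that $R/uR$ be regular and $F$-finite.

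For (a) in the smooth case, I would apply Lemma~\ref{lemma:dmodp} with $A = V$ and $B = V/uV$, using that the residue field $V/uV$ is perfect in the number-ring setting so that $\calD(R/uR, V/uV) = \calD(R/uR)$, as noted at the start of~\S\!~\ref{subsection:D}. For the formal power series case, the explicit basis of divided-power derivatives recalled there is valid for $R = V[[x_1, \dots, x_d]]$ verbatim, and yields the analog of~\eqref{equation:dmodp}. For (b) and (c), $R/uR$ is regular: by base change in the smooth case, and because $(V/uV)[[x_1, \dots, x_d]]$ is regular in the power series case; it is $F$-finite whenever $V/uV$ is, which is automatic for finite residue fields. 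Once (a)--(c) are in place, the proof of part~(1) is a word-for-word repetition of that of Theorem~\ref{theorem:integer}(1), applied to the exact sequence $0 \to R \xrightarrow{u} R \to R/uR \to 0$.

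For part~(2), I would stratify $\Ass_R H^k_\fraka(R)$ via contraction to $\Spec V = \{(0), uV\}$. Primes lying over $uV$ are associated primes of $\ker(u) = \image(d)$, where $d\colon H^{k-1}_\fraka(R/uR) \to H^k_\fraka(R)$; finiteness follows verbatim from the argument in Theorem~\ref{theorem:integer}(3), invoking (c) for finite length of $\image(d)$ and the observation that each simple $\calD(R/uR)$-module has a unique associated prime. For primes lying over $(0)$, one inverts $u$ and bounds $\Ass_{R[u^{-1}]} H^k_\fraka(R[u^{-1}])$. In the smooth case, $R[u^{-1}]$ is a regular finitely generated algebra over the characteristic-zero field $K$, so~\cite[Remark~3.7(i)]{Lyubeznik:Invent} applies directly.

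I expect the main obstacle to be the formal power series case for the $(0)$-stratum: $V[[x_1, \dots, x_d]][u^{-1}]$ is neither local nor affine, so the characteristic-zero finiteness results of~\cite{Lyubeznik:Invent} do not apply off the shelf. The natural remedy is to exploit the $\calD(R[u^{-1}])$-module structure on $H^k_\fraka(R[u^{-1}])$ and prove a finite-length statement via a holonomicity-type argument in characteristic zero, mirroring in that setting the role played here by~\cite[Corollary~5.10]{Lyubeznik:Crelle}; this is essentially the mechanism behind the ``new and much simpler proof of the unramified mixed characteristic case'' alluded to in the introduction.
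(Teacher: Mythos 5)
Your outline follows the paper's strategy closely, and ingredients (a)--(c) are exactly the right things to check; but there is one genuine gap. The theorem is stated for an arbitrary discrete valuation ring of mixed characteristic, whereas your argument silently assumes that the residue field $V/uV$ is perfect (for the identification $\calD(R/uR,V/uV)=\calD(R/uR)$) and $F$-finite (for the $\calF$-module results of~\cite{Lyubeznik:Crelle}, in particular the finite length of $H^{k-1}_\fraka(R/uR)$ as a $\calD(R/uR)$-module). You justify this by appealing to ``the number-ring setting'' and ``finite residue fields,'' but no such hypothesis is present. The paper's proof begins with a reduction you are missing: there is a faithfully flat extension of discrete valuation rings $(V,uV)\to(V',uV')$ with $V'/uV'$ perfect (Bourbaki, \emph{gonflements}); passing to $R'=R\otimes_VV'$ (resp.\ a power series ring over $V'$), which is faithfully flat over $R$, one may assume the residue field is perfect. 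Without this step neither (a) nor (c) is available in general, so the argument does not prove the theorem as stated.

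Two smaller points. For the primes lying over $(0)$ in the formal power series case, you correctly identify the obstacle ($V[[x_1,\dots,x_d]][u^{-1}]$ is neither local nor affine) but leave the remedy as a sketch; the paper simply invokes the existing argument of~\cite{Lyubeznik:Comm} for this stratum, so you could (and should) do the same rather than promising a new holonomicity argument. Relatedly, you have the attribution backwards: the ``much simpler proof'' advertised in the introduction concerns the primes \emph{containing} $u$ (handled via the Koszul-to-\v Cech comparison and $\calD(R/uR)$-module length), not the characteristic-zero stratum, which is treated exactly as in the earlier work.
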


\begin{proof}
We first reduce to the case where $V$ has a perfect residue field: There exists a discrete valuation ring $(V',uV')$ such that $V'/uV'$ is a perfect field, and $V\to V'$ is faithfully flat, see, for example, \cite[Chapter~IX,~Appendice~2]{Bourbaki}. Take $R'$ to be either $R\otimes_V V'$ or a formal power series ring over $V'$, in the respective cases; note that if $R$ is smooth over $V$, then $R'$ is smooth over $V'$. In either case, $R'$ is faithfully flat over $R$, and it suffices to prove the assertions of the theorem for the ring $R'$.

We may thus assume that $V/uV$ is a perfect field; it follows that $R/uR$ is an $F$-finite regular ring. As before, the exact sequence
\[
\CD
0@>>>R@>u>>R@>>>R/uR@>>>0
\endCD
\]
induces the commutative diagram with exact rows:
\[
\CD
@>>>H^{k-1}(\bsf;R)@>\pi>>H^{k-1}(\bsf;R/uR)@>>>H^k(\bsf;R)@>u>>H^k(\bsf;R)@>>>\\
@. @V\alpha' VV @VV\alpha V @VVV @VVV \\
@>>>H^{k-1}_\fraka(R)@>\phi>>H^{k-1}_\fraka(R/uR)@>d>>H^k_\fraka(R)@>u>>H^k_\fraka(R)@>>>
\endCD
\]
The bottom row is a complex of $\calD(R,V)$-modules; specifically, $\image(\phi)$ and $\image(d)$ are $\calD(R,V)$-modules. Since they are annihilated by $u$, they are also modules over the ring~$\calD(R,V)/u\calD(R,V)$. If $R$ is smooth over $V$, then Lemma~\ref{lemma:dmodp} gives
\[
\calD(R,V)/u\calD(R,V)\ =\ \calD(R/uR,\,V/uV)\,;
\]
the same holds when $R$ is a ring of formal power series over $V$ by~\eqref{equation:dmodp}. Moreover, since~$V/uV$ is a perfect field, one has
\[
\calD(R/uR\,,\,V/uV)\ =\ \calD(R/uR)\,.
\]
The remainder of the proof now proceeds analogous to that of Theorem~\ref{theorem:integer}.\footnote{In the formal power series case one cannot use \cite[Theorem~2.4]{Lyubeznik:Invent} for a proof of the finiteness of the prime ideals not containing $u$ because the ring is not finitely generated over a field. A proof of this remains the same as in \cite[pp.~5880 (from line -5)--5882]{Lyubeznik:Comm}; but our proof in the formal power series case of the finiteness of the primes containing $u$ is much simpler than in \cite{Lyubeznik:Comm}.}
\end{proof}

As a consequence, we recover the following result of Lyubeznik, \cite[Theorem~1]{Lyubeznik:Comm}:

\begin{corollary}
Let $R$ be an unramified regular local ring of mixed characteristic, or, more generally, assume that the completion of $R$ is a formal power series ring over a discrete valuation ring of mixed characteristic.

Then each local cohomology module $H^k_\fraka(R)$ has finitely many associated prime ideals.
\end{corollary}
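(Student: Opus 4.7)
The plan is to reduce the general statement to the formal power series case already handled in Theorem~\ref{theorem:dvr} by passing to the completion and invoking faithful flatness.

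First I would observe that the first hypothesis is subsumed by the second: by Cohen's structure theorem, the completion of an unramified regular local ring of mixed characteristic is automatically a formal power series ring $V[[x_1,\dots,x_n]]$ over a complete discrete valuation ring $V$ of mixed characteristic. So I may assume that $\hat R$, the completion of $R$, is itself such a formal power series ring.

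Next, I choose generators $\bsf=f_1,\dots,f_t$ of $\fraka$ and use that $R\to\hat R$ is faithfully flat. Flat base change for local cohomology yields
\[
H^k_\fraka(R)\otimes_R \hat R \ \cong\ H^k_{\fraka\hat R}(\hat R),
\]
and Theorem~\ref{theorem:dvr}(2), applied to the pair $(V,\hat R)$ with the ideal $\fraka\hat R$ generated by the images of $\bsf$, shows that the right-hand side has only finitely many associated primes as an $\hat R$-module.

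To descend from $\hat R$ to $R$, I would invoke the standard fact (a consequence of the description $\Ass_S(M\otimes_R S)=\bigcup_{\frakp\in\Ass_R M}\Ass_S(S/\frakp S)$ for flat $R\to S$) that for a faithfully flat ring map $R\to S$ and any $R$-module $M$, every $\frakp\in\Ass_R M$ arises as the contraction $Q\cap R$ for some $Q\in\Ass_S(M\otimes_R S)$. In particular, finiteness of $\Ass_S(M\otimes_R S)$ forces finiteness of $\Ass_R M$. Applied to $M=H^k_\fraka(R)$ and $S=\hat R$, this completes the proof. The entire argument is pure faithfully flat descent once Theorem~\ref{theorem:dvr} is in hand, so there is no real obstacle; the only point requiring verification is that the completion fits the hypotheses of Theorem~\ref{theorem:dvr}, which holds by assumption.
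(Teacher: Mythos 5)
Your proposal is correct and takes essentially the same route as the paper, which likewise reduces to the formal power series case via completion and then invokes Theorem~\ref{theorem:dvr}; you have simply spelled out the faithfully flat descent of associated primes that the paper leaves implicit.
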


\begin{proof}
One reduces to the case where $R$ is a formal power series ring over a discrete valuation ring of mixed characteristic; the result then follows from Theorem~\ref{theorem:dvr}.
\end{proof}

\begin{theorem}
\label{theorem:dedekind}
Let $A$ be the ring of integers of a number field, or, more generally, a Dedekind domain such that for each height one prime ideal $\frakp$ of $A$, the local ring $A_\frakp$ has mixed characteristic. Let $R$ be a smooth $A$-algebra. Then each local cohomology module~$H^k_\fraka(R)$ has finitely many associated prime ideals.
\end{theorem}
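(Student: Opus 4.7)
The plan is to follow the strategy of Theorem~\ref{theorem:integer} essentially verbatim, replacing the role of a prime integer $p\in\ZZ$ by that of a nonzero prime ideal $\frakp$ of $A$ (so that $A_\frakp$ is a DVR of mixed characteristic, bringing Theorem~\ref{theorem:dvr} into play), and handling the generic point $(0)\in\Spec A$ via the characteristic-zero theory of~\cite{Lyubeznik:Invent}.

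First I would fix a finite generating set $\bsf=f_1,\dots,f_t$ for $\fraka$ and let $S\subseteq\Spec A$ consist of the contractions to $A$ of the (finitely many) associated primes of the finitely generated $R$-module $H^k(\bsf;R)$. Since $A$ has Krull dimension at most one, $S$ is finite and consists of the zero ideal (possibly) together with finitely many maximal ideals of $A$. I would then partition $\Ass_R H^k_\fraka(R)$ by contraction to $A$ and bound each piece separately, using the standard fact that the primes in $\Ass_R H^k_\fraka(R)$ contracting to a given $\frakp$ correspond bijectively, via extension along $R\to R_\frakp$, to the primes in $\Ass_{R_\frakp}H^k_{\fraka R_\frakp}(R_\frakp)$ contracting to $\frakp A_\frakp$; here one uses that local cohomology commutes with localization.

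The core step is to show that for every maximal ideal $\frakp\notin S$, no associated prime of $H^k_\fraka(R)$ contracts to $\frakp$. After localizing at $\frakp$, the ring $R_\frakp$ is smooth over the mixed-characteristic DVR $A_\frakp$, and every associated prime of $H^k(\bsf;R_\frakp)=H^k(\bsf;R)_\frakp$ must contract to $(0)\subset A_\frakp$, since $\frakp\notin S$ and the only prime of $A$ strictly below $\frakp$ is $(0)$; in particular, the uniformizer $u$ of $A_\frakp$ is a nonzerodivisor on $H^k(\bsf;R_\frakp)$. Theorem~\ref{theorem:dvr}(1) then forces $u$ to be a nonzerodivisor on $H^k_{\fraka R_\frakp}(R_\frakp)$, so no associated prime of the latter can contain $u$, which is exactly the claim.

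It remains to bound the contribution from each $\frakp$ in the finite set $S\cup\{(0)\}$. For $\frakp=(0)$, localization yields $R\otimes_A K$, a smooth and hence regular finitely generated algebra over the characteristic-zero fraction field $K$ of $A$, so~\cite[Remark~3.7~(i)]{Lyubeznik:Invent} gives finitely many associated primes. For each maximal $\frakp\in S$, the ring $R_\frakp$ is smooth over the mixed-characteristic DVR $A_\frakp$, and Theorem~\ref{theorem:dvr}(2) applies directly. Summing these finitely many finite contributions yields the theorem. The main obstacle I anticipate is the unavoidable bookkeeping around $R\to R_\frakp$: confirming that $R_\frakp$ inherits $A_\frakp$-smoothness, that $\Ass$ together with Koszul and local cohomology all localize correctly, and that the conclusion of Theorem~\ref{theorem:dvr}(1) applied inside $R_\frakp$ genuinely rules out the putative associated primes of $H^k_\fraka(R)$ contracting to $\frakp$.
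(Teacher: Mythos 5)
Your proposal is correct and follows essentially the same route as the paper: contract $\Ass_R H^k(\bsf;R)$ to $A$ to get the finite exceptional set, use Theorem~\ref{theorem:dvr}(1) after localizing at each height one prime outside that set to rule out associated primes there, and then invoke Theorem~\ref{theorem:dvr}(2) (together with \cite[Remark~3.7~(i)]{Lyubeznik:Invent} for the generic point) to bound the finitely many remaining fibers. The only cosmetic difference is that the paper phrases the exclusion step contrapositively (an associated prime of $H^k_\fraka(R_\frakp)$ containing $u$ would force one on $H^k(\bsf;R_\frakp)$), which is the same argument.
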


\begin{proof}
Fix a generating set $\bsf$ for $\fraka$. The $R$-module $H^k(\bsf;R)$ has finitely many associated prime ideals; let $\frakp_1,\dots,\frakp_m$ be the contractions of these to the ring $A$. Let~$\frakp$ be a height one prime of $A$ that differs from the $\frakp_i$. We claim that $\frakp$ is not an associated prime of $H^k_\fraka(R)$, viewed as an $A$-module.

Indeed, if it is, then $\frakp A_\frakp$ is an associated prime of $H^k_\fraka(R_\frakp)$ as an $A_\frakp$-module; but then, by Theorem~\ref{theorem:dvr}\!~(1), $\frakp A_\frakp$ is an associated prime of $H^k(\bsf;R_\frakp)$ as an $A_\frakp$-module, implying that~$\frakp$ is an associated prime of~$H^k(\bsf;R)$ as an $A$-module, which is false. This proves the~claim.

Hence $H^k_\fraka(R)$ has finitely many associated primes as an $A$-module. By Theorem~\ref{theorem:dvr}\!~(2), there are finitely many elements of $\Ass_R H^k_\fraka(R)$ lying over each element of $\Ass_A H^k_\fraka(R)$.
\end{proof}


\end{document}